\title{Krull-Tropical Hypersurfaces.\footnote{{\em MSC}:12.70,14B99. {\em
Key words}: Krull valuations, Tropical geometry, algebraic variety,
max-plus algebras}}
\author{Fuensanta Aroca.\footnote{Partially supported by CONACyT 55084.}}
\newtheorem{lem}{Lemma}
\newtheorem{defin}[lem]{Definition}
\newtheorem{remark}[lem]{Remark}
\newtheorem{thm}[lem]{Theorem}
\newtheorem{prop}[lem]{Proposition}
\newtheorem{cor}[lem]{Corollary}
\numberwithin{equation}{section}
\numberwithin{lem}{section}
\begin{document}
\maketitle
\newcommand{\End}{End}
\newcommand{\Ext}{Ext}
\newcommand{\HH}{H}
\newcommand{\Hom}{Hom}
\newcommand{\Ker}{Ker}
\newcommand{\Imm}{Im}
\newcommand{\SL}{SL_\CC}
\newcommand{\Tor}{Tor}
\newcommand{\codim}{codim}
\newcommand{\corank}{corank}
\newcommand{\coker}{coker}
\newcommand{\hd}{hd}
\newcommand{\reg}{reg}

\newcommand{\Vop}{{ V_0 \otimes \dots \otimes V_p}}
\newcommand{\Pop}{{\PP^{k_0}\times  \dots   \times \PP^{k_p}}}

\newcommand{\AAa}{{\EuScript A}}
\newcommand{\AAm}{{\mathbb A}}
\newcommand{\CC}{{\mathbb C}}
\newcommand{\NN}{{\mathbb N}}
\newcommand{\KK}{{\mathbb K}}
\newcommand{\Hh}{{\EuScript H}}
\newcommand{\EE}{{\EuScript E}}
\newcommand{\Ki}{{\EuScript K}}
\newcommand{\Ss}{{\EuScript S}}
\newcommand{\FF}{{\EuScript F}}
\newcommand{\II}{{\mathfrak I}}
\newcommand{\MM}{{\mathfrak M}}
\newcommand{\mm}{{\mathfrak m}}
\newcommand{\OO}{{\EuScript O}}
\newcommand{\OOl}{{\OO_\ell}}
\newcommand{\PPii}{{{\mathbb P}^2}}
\newcommand{\PPn}{{{\mathbb P}^n}}
\newcommand{\PP}{{\mathbb P}}
\newcommand{\RR}{{\mathbb R}}
\newcommand{\ZZ}{{\mathbb Z}}
\newcommand{\QQ}{{\mathbb Q}}
\newcommand{\TT}{{\mathbb T}}
\newcommand{\GG}{{\mathbb G}}
\newcommand{\alphaminus}{{\alpha_{\scriptscriptstyle -}}}
\newcommand{\alphaplus}{{\alpha_{\scriptscriptstyle +}}}
\newcommand{\dual}[1]{{#1}^*}
\newcommand{\second}{{\prime\prime}}

\newcommand{\ms}[1]{\mbox{\hspace{#1cm}}}
\newcommand{\fr}[1]{\stackrel{#1}{\longrightarrow}}
\newcommand{\A}{\stackrel{2}{\wedge}}
\newcommand{\x}{\otimes}
\newcommand{\Uu}{{x_0+x_1+x_2}}
\newcommand{\p}{\oplus}
\newcommand{\fd}{\rightarrow}
\newcommand{\V}{V_n^{\vee}}
\newcommand{\ac}[1]{\tilde{#1}}
\newcommand{\Om}{\Omega^1}
\newcommand{\vsp}{\mathop\rightarrow\limits}
\newcommand{\Ho}{\HH^{0}}
\newcommand{\W}{\wedge}
\newcommand{\Pa}[1]{#1_{\alpha}}
\newcommand{\Aa}[1]{#1^{\alpha}}
\newcommand{\Pm}[1]{#1_{\mu}}
\newcommand{\Am}[1]{#1^{\mu}}
\newcommand{\NI}{\not\in}
\newcommand{\ch}{\vee}
\newcommand{\LL}[1]{{\mathcal L}_{#1}} 
\newcommand{\Reg}[1]{{\mathcal C}_{#1}} 
\newcommand{\Rag}[1]{{\mathcal D}_{#1}}
\newcommand{\KKK}{\sl K}
\newcommand{\val}{val}
\newcommand{\ord}{ord}
\newcommand{\inval}{In}
\newcommand{\Inval}{{\mathcal I}n}
\newcommand{\idmax}[1]{{\mathcal J}_{#1}}
\newcommand{\VarTrop}{{\bf TV}}
\newcommand{\ceros}{{\bf V}}
\newcommand{\Trop}{{\mathcal T}}
\newcommand{\VarT}{{\mathcal V}}
\newenvironment{RED}{\color{red}}{\normalcolor}
\newenvironment{BLUE}{\color{blue}}{\normalcolor}
\newenvironment{GREEN}{\color{green}}{\normalcolor}
\begin{abstract}

    The concepts of tropical-semiring and tropical hypersurface, are extended for an arbitrary ordered group.
    Then, we define the tropicalization of a polynomial with coefficients in a Krull-valued field.

    After a close study of the properties of the operator ``tropicalization" we conclude with an extension of
 Kapranov's theorem to algebraically closed fields together with a
    valuation over an ordered group.

\end{abstract}

\section*{Introduction}

The {\em tropical semi-ring} is the set $\TT:=\RR\cup\{\infty\}$
together with the operations $a\oplus b:=\min \{a,b\}$ and $a\odot b
:= a+b$. A {\em tropical hypersurface} is a subset of $\RR^N$
defined by a polynomial with coefficients in $\TT$. A valuation of a
field into the real numbers is used to {\em tropicalize} algebraic
geometry propositions. A naturally real-valued algebraically closed
field is the field of Puiseux series.

Let $\KK$ be an algebraically closed real-valued field.
In \cite{EinsiedlerKapranov:2006} M. Einsieder, M. Kapranov and D. Lind show that the image of an
algebraic hypersurface via a valuation into the reals coincides with
the non-linearity locus of its tropical map.


Valuations into the real numbers are just a particular type of
valuations called {\em classical} (see for example
\cite{Ribenboim:1999}). In 1932 W. Krull extended the classical
definition considering valuations  with values in an arbitrary
ordered group \cite{Krull:1932}. Krull's definition is the one
currently used in most articles and reference texts (see for example
\cite{ZariskiSamuel:1975II, Eisenbud:1995,Spivakovsky:1990}).

Replacing $\RR$ by another totally ordered group $\Gamma$, the
tropical semi-ring $\GG := \Gamma\cup\{\infty\}$ may be defined
naturally. The same happens with the concept of tropical
hypersurface and the tropicalization of a polynomial. A first step
in this direction has been done in \cite{FAroca:2008} where an
example is given.

In this note we extend these concepts and prove some properties of
the tropicalization map. Using these properties we extend the so
called Kapranov's theorem. Our proof is not just an extension of an
existing proof in the classical case but it is essentially
different.

In \cite{EinsiedlerKapranov:2006}, a tropical hypersurface is
defined as the closure in $\RR^N$ of the image, via valuation, of an
algebraic hypersurface. Defining the tropical hypersurface as a
subset of the group of values has the advantage (even when the group
of values is contained in $\RR$) that we do not need to deal with
topological arguments. This idea is already present in
\cite{Katz:2009}.

Sections 1 and 2 are devoted to extend the definitions of tropical
semiring and tropical hypersurface. In sections 3 and 4 we recall
the definition of Krull valuation and extend the definition of
tropicalization and tropical hypersurface of a polynomial with
coefficients in a valued field.

In section 5 we prove that the hypersurface associated to the
tropicalization of a product is the union of the hypersurfaces of
the tropicalization of its factors. Kapranov's theorem in one
variable comes as a consequence of this fact.

Sections 6 and 7 are devoted to finding elements for which the value
of a polynomial evaluated at a point is equal to the image of its
value under the map induced by the tropicalization of the
polynomial.

In section 8 we give the proof of the extension of Kapranov's theorem.

I would like to thank Jesús del Blanco Maraña for answering all my
naive, and not so naive, questions about valuations. I also thank
Martha Takane and Luc\'ia L\'opez de Medrano for fruitful
discussions during the preparation of this note.

\section{Ordered groups, tropical semi-rings and tropical polynomials.}

A {\bf total ordered group} is an abelian group $(\Gamma ,+)$
equipped with a total order such that for all $x, y,z \in\Gamma$ if
$x\leq y$ then $x+z\leq y+z$. For $a>0$ we have $a+a>0+a$, therefore
a total ordered group is torsion free.

The following definition is an extension of a classical definition for the ordered group
$(\RR ,+,\leq)$ \cite{ItenbergMikhalkin:2007,RichterSturmfels:2005,Gathmann:2006}.
\begin{defin}
A total ordered group $(\Gamma ,+,\leq)$ induces an idempotent
semi-ring $\GG:= (\Gamma\cup\{\infty\},\oplus ,\odot)$. Where
\begin{itemize}
    \item $a\oplus b:= \min\{ a,b\}$ and
    $a\oplus\infty := a$  for $a,b\in\Gamma$.
    \item $a\odot b:= a+b$ and
    $a\odot\infty := \infty$  for $a,b\in\Gamma$.
\end{itemize}
This semiring is called the {\bf
min-plus algebra} induced by $\Gamma$ or the {\bf tropical semi-ring}.
\end{defin}

A non-zero Laurent polynomial $F\in \GG [x^*]:=\GG
[x_1,x_1^{-1},\ldots ,x_N,x_N^{-1}]$ is an expression of the form
\begin{equation}\label{polinomio de Laurent tropical}
F=\bigoplus_{\alpha\in {\mathcal E} (F)\subset\ZZ^N} a_\alpha \odot
x^\alpha,\qquad a_\alpha\in\Gamma ,\quad\#{\mathcal E}
(F)<\infty.
\end{equation}
These polynomials are called {\bf tropical polynomials}.

The set of tropical polynomials is a semi-ring with the natural
operations: Given $F$ as above and $G=\bigoplus_{\beta\in {\mathcal
E} (G)\subset\ZZ^N} b_\beta \odot x^\beta$, we define

\[
F\odot G :=\bigoplus_{\eta\in \EE (F)+\EE (G)} \left(
\bigoplus_{\alpha +\beta =\eta} a_\alpha\odot b_\beta\right)\odot x^\eta
\]
and
\[
F\oplus G :=\bigoplus_{\eta\in\EE (F)\cup\EE (G)} a_\eta\odot b_\eta\odot
x^\eta
\]
where $a_\eta :=\infty$ for all $\eta\in\EE (G)\setminus\EE (F)$ and
$b_\eta :=\infty$  for all $\eta\in\EE (F)\setminus\EE (G)$.

\section{Tropical maps and non-linearity locus.}

Let $\GG$ be the min-plus algebra induced by the group $(\Gamma
,\leq )$.

Given $g \in\GG$ and a natural number $k$, we will use the standard
notation
\[
g^k := \overbrace{ g\odot\cdots \odot g
}^{k\,\text{times}}\quad\text{and}\quad g^{-k} =
{\left(g^{-1}\right)}^k;
\]
and, for $\gamma\in\Gamma^N$ and $\alpha\in\ZZ^N$ we will denote
\[
\gamma^\alpha := {\gamma_1}^{\alpha_1}\odot\cdots\odot
{\gamma_N}^{\alpha_N}.
\]

 A tropical polynomial $F=\bigoplus_{\alpha\in {\mathcal E} (F)\subset\ZZ^N} a_\alpha \odot
x^\alpha$
induces a map
$F:\Gamma^N\longrightarrow \Gamma$ given by
  \[
       F: \gamma
            \mapsto
                \bigoplus_{\alpha\in\EE (F)} a_\alpha \odot
                \gamma^\alpha .
    \]
A map induced by a tropical polynomial is called a {\bf tropical
map}.

 For each $\gamma\in\Gamma^N$ there exists at least one
$\alpha\in \EE (F)$ such that $F (\gamma) =
a_\alpha\odot\gamma^\alpha$. The set of $\alpha$'s with this
property will be denoted by $\Rag\gamma (F)$. That is
\begin{equation}\label{definicion de Rag}
    \Rag\gamma (F) := \{ \alpha\in\EE (F)\mid F (\gamma )= a_\alpha \odot\gamma^\alpha\}.
\end{equation}

\begin{defin}
The {\bf hypersurface associated to $F$} is the subset of $\Gamma^N$
given by
\begin{equation}
\label{Definicion con D y C} \VarT (F):=\{\gamma\in\Gamma\mid
\#\Rag\gamma (F)> 1\}.
\end{equation}
\end{defin}

For $\alpha\in\EE (F)$, the restriction $F|_{\{\gamma\in\Gamma^N\mid
\alpha\in\Rag\gamma (F)\}}: \gamma\mapsto
a_\alpha\odot\gamma^\alpha$ is an {\em affine linear} function on $\Gamma^N$. We
say that $F$ defines a {\em piecewise linear} function,
 being the  hypersurface associated to $F$ its {\em non-linearity locus}.

\section{Valuations.}

Let $(\Gamma ,\leq , + )$ be a total ordered group and let
$(\GG,\oplus ,\odot )$ be its min-plus algebra. A  {\bf valuation}
of a field $(\KK ,+_\KK ,\cdot )$ with values in $(\Gamma ,\leq , +
)$ is a surjective map $\val :\KK\longrightarrow \GG$  such that

\begin{enumerate}
    \item $\val (x) =\infty \Leftrightarrow x=0$,
    \item $\val (x\cdot y)=\val (x) \odot \val (y)$ for all $x,y\in \KK$, and
    \item $\val (x+_\KK y)\geq \val (x) \oplus \val (y)$.
\end{enumerate}

We say that $\KK$ has {\bf values} in $\Gamma$. A field together with a valuation is called a {\bf valued field} and
$(\Gamma ,\leq , + )$ is called {\bf the group of values}.

Note that
\begin{itemize}
    \item
    $\val (1)=\val(1\cdot1) =\val (1) \odot \val (1)\stackrel{\Gamma \text{ is torsion free}}{\Longrightarrow} \val (1)=0$.
    \item
    $0=\val ((-1)(-1))= \val (-1) \odot \val (-1)\stackrel{\Gamma \text{ is torsion free}}{\Longrightarrow}\val (-1)=0$.
    \item
    $\val (-b) = \val ((-1)b)=\val (-1)\odot \val (b) =\val (b)$.
\end{itemize}


\begin{lem}\label{Hay mas de un sumando}
 Let $E\subset \KK$ be a finite set. If $\val\left(\sum_{\varphi\in E}\varphi\right)
 >\oplus_{\varphi\in E}\val\varphi$ then the set of elements in $E$ where the valuation attains its
 minimum has at least two elements.
\end{lem}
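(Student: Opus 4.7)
The plan is to prove the contrapositive: if the minimum $m := \bigoplus_{\varphi\in E}\val\varphi$ is attained at a \emph{unique} element $\varphi_0\in E$, then $\val\bigl(\sum_{\varphi\in E}\varphi\bigr)=m$, so equality holds in axiom (3).

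First I would establish the standard auxiliary lemma: whenever $a,b\in\KK$ satisfy $\val a\neq \val b$, one has $\val(a+b)=\val a\oplus \val b$. The $\geq$ direction is axiom (3). For the opposite inequality, assume without loss of generality $\val a<\val b$, and write $a=(a+b)+(-b)$; applying axiom (3) and the already-observed fact $\val(-b)=\val b$ gives $\val a\geq \val(a+b)\oplus \val b$, so if $\val(a+b)>\val a$ we would get $\val a>\val a$, a contradiction. Hence $\val(a+b)\leq \val a$, and combined with the first direction we get equality.

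Next I would handle the tail sum $S:=\sum_{\varphi\in E\setminus\{\varphi_0\}}\varphi$ by induction on $\#E$. Iterating axiom (3), one obtains $\val S\geq \bigoplus_{\varphi\in E\setminus\{\varphi_0\}}\val\varphi$, and by the uniqueness of the minimum this latter quantity is strictly greater than $m$. In particular $\val S>m=\val \varphi_0$, so $\val S\neq \val\varphi_0$.

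Finally, applying the auxiliary lemma to $a=\varphi_0$ and $b=S$, I conclude
\[
\val\Bigl(\sum_{\varphi\in E}\varphi\Bigr)=\val(\varphi_0+S)=\val\varphi_0\oplus \val S=\val\varphi_0=m,
\]
which contradicts the hypothesis $\val\bigl(\sum_{\varphi\in E}\varphi\bigr)>m$. I do not expect any serious obstacle here: the only subtlety is remembering to invoke $\val(-b)=\val b$ (already noted in the excerpt) when deriving the sharper form of the ultrametric inequality, and being careful that the induction on $\#E$ really does give the strict bound $\val S>m$ because every summand in $S$ has valuation strictly greater than $m$.
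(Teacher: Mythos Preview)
Your proof is correct and follows essentially the same route as the paper's: both argue by contradiction from a unique minimizer $\varphi_0$, split the sum as $\varphi_0+S$ with $\val S>\val\varphi_0$, and use the rewriting $\varphi_0=(\varphi_0+S)+(-S)$ together with axiom~(3) to force $\val(\varphi_0+S)\leq\val\varphi_0$. The only cosmetic difference is that you isolate the step ``$\val a\neq\val b\Rightarrow\val(a+b)=\val a\oplus\val b$'' as a preliminary lemma, whereas the paper performs that computation inline.
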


\begin{proof}

 Let $E_{\min}$ be the subset of $E$
 consisting of elements where the valuation attains its minimum:
 \[
 E_{\min}=\{ \varphi\in E\mid
    \val\varphi =\oplus_{\varphi\in E}\val\varphi\}.
 \]
Suppose that $E_{\min}=\{ a\}$ and set $b:=\sum_{\varphi\in
E\setminus\{ a\}}\varphi$. We have $\val (b)>\val (a)$ and
\[
\val\left(\sum_{\varphi\in E}\varphi\right)
 >\oplus_{\varphi\in E}\val\varphi \Longleftrightarrow \val (a+b)>\val (a).
\]
Then $\val (a) =\val ((a+b)-b)\geq \val (a+b)\oplus \val (b)>\val
(a)$ which is a contradiction.
\end{proof}

\section{The tropicalization.}

Let $(\KK ,\val)$ be a valued field with values in a group $\Gamma$
and let $\GG$ be the min-plus algebra induced by $\Gamma$.

 A non-zero Laurent polynomial in $N$ variables with coefficients in $\KK$,
$f\in\KK [x^*]$, is written in the form:
\begin{equation}\label{polinomio en K de x}
    f= \sum_{\alpha\in\EE (f)\subset \ZZ^N} \varphi_\alpha x^\alpha\qquad \varphi_\alpha\in\KK\setminus\{ 0\}, \quad \#\EE (f)<\infty.
\end{equation}
The polynomial $f$ via the valuation $\val$ induces an element of
$\GG [x^*]$
\[
\Trop f:= \bigoplus_{\alpha\in \EE (f)  \subset \ZZ^N} \val
(\varphi_\alpha)\odot x^\alpha
\]
this polynomial is called {\bf the tropicalization of $f$}.

\begin{remark}\label{El valor de la funcion es menor o igal que la tropicalizacion envaluado en el valor}
    Since $\val (a+b)\geq\val (a)\oplus\val (b)$ and $\val (ab)= \val
    (a)\odot\val (b)$, we have
    \[
    \val (f(x))\geq \Trop f (\val (x))\quad\text{for all}\quad
    x\in\KK^N.
    \]
\end{remark}

Given a  Laurent polynomial in $N$ variables with coefficients in
$\KK$, $f\in\KK [x^*]:=\KK [x_1,x_1^{-1},\ldots ,x_N,x_N^{-1}]$, the
set of {\bf zeroes} of $f$ is defined as
\[
\ceros (f):= \{ x\in {(\KK\setminus\{ 0\})}^N\mid f(x)=0\}.
\]

The {\bf tropical hypersurface} associated to $f$ is the set of
values of $\ceros (f)$. That is:
\[
\VarTrop f:= \val (\ceros (f)).
\]

\begin{prop}\label{El valor de un cero esta en la tropicalizacion}
    Let $f$ be a non-zero polynomial in $\KK
    [x_1,x_1^{-1},\ldots ,x_N,x_N^{-1}]$.
     If $\phi\in\KK^N$ is a zero
    of $f$, then ${\val} \phi$
    is in the hypersurface associated to the tropicalization of $f$.
    That is:
    \[
    \VarTrop f\subset \VarT\Trop f.
    \]
\end{prop}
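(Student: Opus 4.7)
The plan is to reduce the proposition directly to Lemma~\ref{Hay mas de un sumando} applied to the family of monomial contributions $\varphi_\alpha\phi^\alpha$, $\alpha\in\EE(f)$. Set $\gamma:=\val\phi\in\Gamma^N$; since $\phi\in(\KK\setminus\{0\})^N$ (this is built into the definition of $\ceros(f)$), each coordinate valuation $\val\phi_i$ lies in $\Gamma$, so $\gamma^\alpha\in\Gamma$ for every $\alpha\in\ZZ^N$. Using properties (1) and (2) of a valuation together with the observation that $\val(-b)=\val(b)$, each term satisfies
\[
\val(\varphi_\alpha\phi^\alpha)=\val(\varphi_\alpha)\odot\gamma^\alpha,
\]
and in particular $\bigoplus_{\alpha\in\EE(f)}\val(\varphi_\alpha\phi^\alpha)=\Trop f(\gamma)\in\Gamma$.

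Next I would exploit the strict failure of additivity forced by $\phi$ being a zero. Since $f(\phi)=\sum_{\alpha\in\EE(f)}\varphi_\alpha\phi^\alpha=0$, we have $\val(f(\phi))=\infty$, which is strictly larger than the element $\Trop f(\gamma)\in\Gamma$. In the language of Lemma~\ref{Hay mas de un sumando}, this says
\[
\val\Bigl(\sum_{\alpha\in\EE(f)}\varphi_\alpha\phi^\alpha\Bigr)\;>\;\bigoplus_{\alpha\in\EE(f)}\val(\varphi_\alpha\phi^\alpha),
\]
the hypothesis of the lemma with $E=\{\varphi_\alpha\phi^\alpha\mid\alpha\in\EE(f)\}$.

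The lemma then supplies at least two indices $\alpha\in\EE(f)$ for which $\val(\varphi_\alpha\phi^\alpha)$ equals the common minimum $\Trop f(\gamma)$, i.e., for which $\val(\varphi_\alpha)\odot\gamma^\alpha=\Trop f(\gamma)$; by definition~(\ref{definicion de Rag}) these are precisely the elements of $\Rag\gamma(\Trop f)$, whence $\#\Rag\gamma(\Trop f)\geq 2$ and $\gamma\in\VarT\Trop f$.

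The one mild technicality worth a line of care is that the lemma is stated for a \emph{set} $E\subset\KK$, whereas the monomials $\varphi_\alpha\phi^\alpha$ are an indexed family that in principle could collide in $\KK$. To cover this I would either re-run the short argument of the lemma with an indexed family (the proof of Lemma~\ref{Hay mas de un sumando} goes through verbatim: assume a unique index $\alpha_0$ achieves the minimum, set $a=\varphi_{\alpha_0}\phi^{\alpha_0}$, $b=\sum_{\alpha\neq\alpha_0}\varphi_\alpha\phi^\alpha$, and derive $\val(a)\geq\val(a+b)\oplus\val(b)>\val(a)$), or note that any collision $\varphi_\alpha\phi^\alpha=\varphi_{\alpha'}\phi^{\alpha'}$ with $\alpha\neq\alpha'$ already yields two distinct indices in $\Rag\gamma(\Trop f)$. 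This is the only real obstacle, and it is very minor; the bulk of the proof is essentially a one-line invocation of the lemma.
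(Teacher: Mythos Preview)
Your proposal is correct and follows essentially the same route as the paper: apply Lemma~\ref{Hay mas de un sumando} to the monomial contributions $\varphi_\alpha\phi^\alpha$, identify the minimum-attaining set with $\Rag{\val\phi}(\Trop f)$ via $\val(\varphi_\alpha\phi^\alpha)=\val(\varphi_\alpha)\odot(\val\phi)^\alpha$, and conclude. Your explicit handling of the set-versus-indexed-family issue is in fact more careful than the paper's own proof, which applies the lemma tacitly to the indexed family without comment.
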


\begin{proof}

    For $f=\sum_{\alpha\in\EE (f)}\varphi_\alpha x^\alpha$, we have
    \[
    \Trop f =\oplus_{\alpha\in\EE (f)} \val (\varphi_\alpha)\odot
    x^\alpha.
    \]
    Since
    $\sum_{\alpha\in\EE (f)}\varphi_\alpha\phi^\alpha=0$,
     by lemma \ref{Hay mas de un
    sumando}, the set
    \[
    E_{\min} :=\{\alpha_0 \in\EE (f)\mid \val (\varphi_{\alpha_0}\phi^{\alpha_0}) = \oplus_{\alpha\in\EE (f)}
    \val (\varphi_{\alpha}\phi^{\alpha})\}
    \]
    has at least two elements.

    Now $\val (\varphi_{\alpha}\phi^{\alpha})= \val (\varphi_{\alpha})\odot {(\val \phi)}^{\alpha}$, then $E_{\min} =
    \Rag{\val\phi} (\Trop f)$ and
    we have the result.
\end{proof}

\section{The tropicalization of a product.}
The map $\Trop : \KK [x_1,\ldots ,x_N]\longrightarrow \GG
[x_1,\ldots ,x_N]$ may not preserve sum nor product.
 Anyhow, the tropical variety of the product may be described.
 \begin{lem}\label{Si elijo los mas pequennos su suma esta}
    Let $\KK$ be a valued field and let $\Gamma$ be its group of
    values. Given $\omega\in\RR^N$ of rationally independent
    coordinates, $f,g\in\KK [x^*]$ and $\gamma\in\Gamma^N$; set
    $\alpha_0\in\Rag\gamma (\Trop f)$ and $\beta_0\in\Rag\gamma
    (\Trop g)$ such that
    \begin{equation}\label{alpha0 es el minimo}
\omega\cdot\alpha_0=\min_{\alpha\in\Rag\gamma (\Trop
f)}\omega\cdot\alpha
\quad\text{and}\quad\omega\cdot\beta_0=\min_{\beta\in\Rag\gamma
(\Trop g)}\omega\cdot\beta.
\end{equation}
Set $\eta_0:=\alpha_0+\beta_0$. We have:
\[
\eta_0\in\Rag\gamma (\Trop (fg))\quad\text{and}\quad
\omega\cdot\eta_0=\min_{\eta\in\Rag\gamma (\Trop
    fg)}\omega\cdot\eta.
\]
 \end{lem}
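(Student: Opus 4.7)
The plan is to leverage the rational independence of $\omega$ to make $\alpha_0$ and $\beta_0$ the unique minimizers of $\omega\cdot\alpha$ on $\Rag\gamma(\Trop f)$ and of $\omega\cdot\beta$ on $\Rag\gamma(\Trop g)$, respectively. Indeed, any two distinct lattice points in $\ZZ^N$ produce distinct $\omega$-inner products (otherwise a nontrivial integer relation among the coordinates of $\omega$ would appear), so the minimum of $\omega\cdot(-)$ over any finite subset of $\ZZ^N$ is attained at exactly one point.

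The heart of the proof is to compute the valuation of the coefficient of $x^{\eta_0}$ in $fg$, namely $\sum_{\alpha+\beta=\eta_0}\varphi_\alpha\psi_\beta$. I claim the term $\varphi_{\alpha_0}\psi_{\beta_0}$ strictly dominates every other summand. Because $\alpha_0\in\Rag\gamma(\Trop f)$ and $\beta_0\in\Rag\gamma(\Trop g)$, one has $\val\varphi_{\alpha_0}+\gamma\cdot\alpha_0 \leq \val\varphi_\alpha+\gamma\cdot\alpha$ and $\val\psi_{\beta_0}+\gamma\cdot\beta_0\leq \val\psi_\beta+\gamma\cdot\beta$ for all $\alpha\in\EE(f)$, $\beta\in\EE(g)$; summing and using $\alpha+\beta=\eta_0=\alpha_0+\beta_0$ gives $\val\varphi_{\alpha_0}+\val\psi_{\beta_0}\leq \val\varphi_\alpha+\val\psi_\beta$. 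If equality held for some $(\alpha,\beta)\neq(\alpha_0,\beta_0)$, it would force equality in both original inequalities, so $\alpha\in\Rag\gamma(\Trop f)$ and $\beta\in\Rag\gamma(\Trop g)$; combining $\omega\cdot\alpha+\omega\cdot\beta=\omega\cdot\eta_0 = \omega\cdot\alpha_0+\omega\cdot\beta_0$ with the entrywise bounds $\omega\cdot\alpha\geq \omega\cdot\alpha_0$ and $\omega\cdot\beta\geq\omega\cdot\beta_0$ forces equality in each, and uniqueness from the first paragraph then yields $(\alpha,\beta)=(\alpha_0,\beta_0)$, a contradiction.

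With strict dominance established, Lemma~\ref{Hay mas de un sumando} gives $\val\left(\sum_{\alpha+\beta=\eta_0}\varphi_\alpha\psi_\beta\right)=\val\varphi_{\alpha_0}+\val\psi_{\beta_0}$. Combining this with the coefficient-wise inequality $\Trop(fg)(\gamma)\geq \Trop f(\gamma)\odot\Trop g(\gamma)$ (immediate from $\val(a+b)\geq\val a\oplus \val b$) and observing that the contribution of $x^{\eta_0}$ to $\Trop(fg)(\gamma)$ is exactly $\Trop f(\gamma)\odot\Trop g(\gamma)$, all inequalities collapse, so $\eta_0\in\Rag\gamma(\Trop(fg))$ and $\Trop(fg)(\gamma)=\Trop f(\gamma)\odot\Trop g(\gamma)$. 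For the $\omega$-minimality, I would take any $\eta\in\Rag\gamma(\Trop(fg))$, apply Lemma~\ref{Hay mas de un sumando} to produce $(\alpha',\beta')$ with $\alpha'+\beta'=\eta$ realizing the valuation of the coefficient of $x^\eta$, and rerun the same collapse argument to conclude $\alpha'\in\Rag\gamma(\Trop f)$ and $\beta'\in\Rag\gamma(\Trop g)$; then $\omega\cdot\eta=\omega\cdot\alpha'+\omega\cdot\beta'\geq \omega\cdot\alpha_0+\omega\cdot\beta_0=\omega\cdot\eta_0$.

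The main obstacle is the strict-dominance claim: it is precisely there that rational independence of $\omega$ is indispensable, ruling out accidental coincidences among the decompositions $\alpha+\beta=\eta_0$ that could otherwise cause the leading term $\varphi_{\alpha_0}\psi_{\beta_0}$ to cancel and artificially raise the valuation of the $x^{\eta_0}$-coefficient of $fg$.
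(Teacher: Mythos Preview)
Your argument is correct and follows essentially the same route as the paper: establish that $\varphi_{\alpha_0}\psi_{\beta_0}$ strictly dominates all other summands at $\eta_0$ (using the $\omega$-uniqueness), invoke Lemma~\ref{Hay mas de un sumando} to compute the valuation of the $x^{\eta_0}$-coefficient, deduce $\eta_0\in\Rag\gamma(\Trop(fg))$ together with $\Trop(fg)(\gamma)=\Trop f(\gamma)\odot\Trop g(\gamma)$, and then show $\Rag\gamma(\Trop(fg))\subset\Rag\gamma(\Trop f)+\Rag\gamma(\Trop g)$ to obtain the $\omega$-minimality.

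One small remark on the last paragraph: you do not need Lemma~\ref{Hay mas de un sumando} to ``produce $(\alpha',\beta')$ realizing the valuation of the coefficient of $x^\eta$'', and in fact that lemma does not guarantee such a pair exists. What you actually want (and what the paper does) is simply to pick any $(\alpha',\beta')$ with $\alpha'+\beta'=\eta$ minimizing $\val(\varphi_{\alpha'}\psi_{\beta'})$; then $\val(\varphi_{\alpha'}\psi_{\beta'})\le\val\bigl(\sum_{\alpha+\beta=\eta}\varphi_\alpha\psi_\beta\bigr)$, and your collapse argument goes through verbatim with this inequality in place of equality.
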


\begin{proof}
    Set $f=\sum_{\alpha\in\EE (f)} \varphi_\alpha x^\alpha$ and $g=\sum_{\beta\in\EE (g)} \varphi'_\beta x^\beta$. Then
\[
fg=\sum_{\eta\in\EE (f)\cup\EE (g)}\left(\sum_{\alpha +\beta =\eta} \varphi_\alpha\varphi'_\beta\right) x^\eta.
\]


 By (\ref{alpha0 es el minimo}),
 we have
\begin{equation}\label{en eta0 se alcanza el omegamin}
    \omega\cdot\eta_0=\min_{\eta\in\Rag\gamma (\Trop
    f)+\Rag\gamma (\Trop g) }\omega\cdot\eta.
\end{equation}

Since $\alpha_0\in\Rag\gamma (\Trop f)$ and $\beta_0\in\Rag\gamma (\Trop g)$, by definition (\ref{definicion de Rag}), we have
\begin{equation}\label{alpha0 y beta0 estan el Raggamma}
    \left\{
    \begin{array}{lcc}
        &\val (\varphi_{\alpha_0})\odot\gamma^{\alpha_0} \leq \val (\varphi_{\alpha})\odot\gamma^{\alpha},
            & \forall \alpha\in\EE (f)\\
        \text{and}
            & &\\
        &\val (\varphi'_{\beta_0})\odot\gamma^{\beta_0} \leq \val (\varphi'_{\beta})\odot\gamma^{\beta},
            & \forall \beta\in\EE (g).
    \end{array}
    \right.
\end{equation}
and
\begin{equation}\label{alpha0 y beta0 estan el Raggamma y alpha y beta no}
    \left\{
    \begin{array}{lcc}
        &\val (\varphi_{\alpha_0})\odot\gamma^{\alpha_0} < \val (\varphi_{\alpha})\odot\gamma^{\alpha},
            & \forall \alpha\in\EE (f)\setminus\Rag\gamma (\Trop f)\\
        \text{and}
            & &\\
        &\val (\varphi'_{\beta_0})\odot\gamma^{\beta_0} < \val (\varphi'_{\beta})\odot\gamma^{\beta},
            & \forall \beta\in\EE (g)\setminus\Rag\gamma (\Trop g).
    \end{array}
    \right.
\end{equation}

 Let $\alpha\in\EE (f)$ and
$\beta\in\EE (f)$ be such that $\eta_0=\alpha +\beta$. If
$\alpha_0\neq\alpha$ then either $\omega\cdot\alpha
<\omega\cdot\alpha_0$ or $\omega\cdot\beta <\omega\cdot\beta_0$.
Then, by (\ref{alpha0 es el minimo}), $\alpha\notin\Rag\gamma (\Trop
f)$ or $\beta\notin\Rag\gamma (\Trop g)$, and then
\begin{equation}\label{menos estricto con los alpha0}
    \left\{
    \begin{array}{lc}
            &\alpha +\beta =\eta_0\\
        \text{and}
            &\\
            &\alpha\neq\alpha_0
    \end{array}
    \right.
    \Rightarrow
    \left\{
    \begin{array}{lc}
            &\val (\varphi_{\alpha_0} )\odot\gamma^{\alpha_0}<\val (\varphi_\alpha )\odot\gamma^\alpha\\
        \text{or}
            &\\
            &\val (\varphi'_{\beta_0} )\odot\gamma^{\beta_0}<\val (\varphi'_\beta )\odot\gamma^\beta.
    \end{array}
    \right.
\end{equation}
Inequalities (\ref{menos estricto con los alpha0}) together with
(\ref{alpha0 y beta0 estan el Raggamma}) give
\[
    \left\{
    \begin{array}{lc}
            &\alpha +\beta =\eta_0\\
        \text{and}
            &\\
            &\alpha\neq\alpha_0
    \end{array}
    \right.
    \Rightarrow
\val (\varphi_{\alpha_0}\varphi'_{\beta_0})\odot\gamma^{\eta_0}<\val
(\varphi_{\alpha}\varphi'_{\beta})\odot\gamma^{\eta_0}.
\]
Therefore, by lemma \ref{Hay mas de un sumando},
\begin{equation}\label{el valor es el que tiene que ser}
\val \left(\sum_{\alpha +\beta
=\eta_0}\varphi_\alpha\varphi'_\beta\right)=\val
(\varphi_{\alpha_0}\varphi'_{\beta_0}).
\end{equation}
Inequalities (\ref{alpha0
y beta0 estan el Raggamma}) give
\begin{equation}\label{la desigualdad en la suma}
\val \left(\varphi_{\alpha_0}\varphi'_{\beta_0}\right)\odot\gamma^{\eta_0}\leq
\val \left(\varphi_{\alpha}\varphi'_{\beta}\right)\odot\gamma^{\alpha+\beta},\quad\forall \alpha\in\EE (f),\beta\in\EE (g).
\end{equation}
Equality (\ref{el valor es el que tiene que ser}) together with
(\ref{la desigualdad en la suma}) give
\[
\val \left(\sum_{\alpha +\beta
=\eta_0}\varphi_\alpha\varphi'_\beta\right)\odot\gamma^{\eta_0}\leq
\val \left(\sum_{\alpha +\beta
=\eta}\varphi_\alpha\varphi'_\beta\right)\odot\gamma^\eta,\,\forall\eta\in\EE
(f)+\EE (g).
\]
In other words:
\begin{equation}\label{eta0 esta en rag de gf}
\eta_0\in\Rag\gamma (\Trop fg)
\end{equation}
and
\begin{equation}\label{El valor de la tropicalizacion en gamma en funcion de alpha0y bata0}
\Trop fg (\gamma )=\val \left(\sum_{\alpha +\beta
=\eta_0}\varphi_\alpha\varphi'_\beta\right)\odot\gamma^{\eta_0}\
 =  \val
(\varphi_{\alpha_0}\varphi'_{\beta_0})\odot\gamma^{\eta_0}.
\end{equation}
By (\ref{alpha0 y beta0 estan el Raggamma y alpha y beta no}) and
(\ref{El valor de la tropicalizacion en gamma en funcion de alpha0y bata0}), we have
\begin{equation}\label{rag del producto contenido en suma de rags}
\Rag\gamma (\Trop fg)\subset\Rag\gamma (\Trop f)+\Rag\gamma (\Trop g)
\end{equation}
(\ref{en eta0 se alcanza el omegamin}),  (\ref{eta0 esta en rag de gf}) and
(\ref{rag del producto contenido en suma de rags}) give
\begin{equation}
    \omega\cdot\eta_0=\min_{\eta\in\Rag\gamma (\Trop
    fg)}\omega\cdot\eta.
\end{equation}
\end{proof}

\begin{prop}
The hypersurface associated to the tropicalization of a finite
product of polynomials is equal to the union of the hypersurfaces
associated to the tropicalization of each polynomial. That is
\[
\VarT \Trop (fg)=\VarT (\Trop f )\cup \VarT (\Trop g).
\]
\end{prop}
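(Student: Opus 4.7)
By an easy induction on the number of factors, the claim reduces to proving $\VarT \Trop(fg) = \VarT(\Trop f) \cup \VarT(\Trop g)$ for two factors, which I would establish by a double inclusion, each direction driven by Lemma~\ref{Si elijo los mas pequennos su suma esta}.

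For the inclusion $\VarT(\Trop f) \cup \VarT(\Trop g) \subseteq \VarT \Trop(fg)$, I would assume by symmetry that $\gamma \in \VarT(\Trop f)$, so $|\Rag\gamma(\Trop f)| \geq 2$. Fix any $\omega \in \RR^N$ whose coordinates are linearly independent over $\QQ$ (so that $\omega$ separates the points of $\ZZ^N$) and apply the lemma twice, once with $\omega$ and once with $-\omega$. This produces $\alpha_0,\alpha_1 \in \Rag\gamma(\Trop f)$ and $\beta_0,\beta_1 \in \Rag\gamma(\Trop g)$ realizing, respectively, the $\omega$-minima and $\omega$-maxima of their sets, together with $\eta_0 := \alpha_0+\beta_0$ and $\eta_1 := \alpha_1+\beta_1$, both belonging to $\Rag\gamma(\Trop fg)$. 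Since $|\Rag\gamma(\Trop f)| \geq 2$ and $\omega$ separates integer points, one has $\omega \cdot \alpha_0 < \omega \cdot \alpha_1$; combined with $\omega \cdot \beta_0 \leq \omega \cdot \beta_1$ this gives $\omega \cdot \eta_0 < \omega \cdot \eta_1$, so $\eta_0 \neq \eta_1$ and $\gamma \in \VarT \Trop(fg)$.

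For the reverse inclusion I would argue by contrapositive. Suppose $\Rag\gamma(\Trop f) = \{\alpha_0\}$ and $\Rag\gamma(\Trop g) = \{\beta_0\}$ are singletons. If some $\eta \in \Rag\gamma(\Trop fg)$ were different from $\alpha_0+\beta_0$, I would choose $\omega$ with rationally independent coordinates satisfying $\omega \cdot \eta < \omega \cdot (\alpha_0+\beta_0)$; this is possible because $\eta - (\alpha_0+\beta_0)$ is a nonzero vector in $\ZZ^N$. The lemma applied to this $\omega$ can only pick $\alpha_0$ and $\beta_0$ (they are unique in their sets) and then asserts that $\alpha_0+\beta_0$ minimizes $\omega \cdot \eta'$ over $\Rag\gamma(\Trop fg)$, contradicting the choice of $\omega$. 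Hence $\Rag\gamma(\Trop fg) = \{\alpha_0+\beta_0\}$ is again a singleton and $\gamma \notin \VarT \Trop(fg)$.

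The main obstacle will be the first inclusion: Lemma~\ref{Si elijo los mas pequennos su suma esta} only exhibits \emph{one} element of $\Rag\gamma(\Trop fg)$ per choice of $\omega$, so producing the two distinct elements needed to place $\gamma$ in $\VarT \Trop(fg)$ forces one to play $\omega$ against $-\omega$ and to rely on the rational independence of the coordinates to certify that the resulting minimizer and maximizer do not collapse.
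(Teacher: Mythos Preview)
Your proposal is correct and follows essentially the same approach as the paper: both proofs pick $\omega\in\RR^N$ with rationally independent coordinates and apply Lemma~\ref{Si elijo los mas pequennos su suma esta} to $\omega$ and to $-\omega$ to locate the $\omega$-minimum and $\omega$-maximum of $\Rag\gamma(\Trop fg)$ as $\eta_0=\alpha_0+\beta_0$ and $\eta_1=\alpha_1+\beta_1$. The only organizational difference is that the paper fixes a single $\omega$ up front and packages both inclusions into one chain of equivalences
\[
\bigl(\alpha_0\neq\alpha_1\ \text{or}\ \beta_0\neq\beta_1\bigr)\ \Longleftrightarrow\ \eta_0\neq\eta_1\ \Longleftrightarrow\ \gamma\in\VarT\Trop(fg),
\]
whereas you split the two inclusions and, for the contrapositive, select $\omega$ adapted to a hypothetical extra element $\eta$; both maneuvers are valid and rest on the same lemma.
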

\begin{proof}
Take $\omega\in\RR^N$ of rationally independent coordinates. Set
$\alpha_0\in\Rag\gamma (\Trop f)$ and $\beta_0\in\Rag\gamma (\Trop
g)$ such that
\[
\omega\cdot\alpha_0=\min_{\alpha\in\Rag\gamma (\Trop
f)}\omega\cdot\alpha\quad\text{and}\quad
\omega\cdot\beta_0=\min_{\beta\in\Rag\gamma (\Trop
g)}\omega\cdot\beta.
\]
    Now take $\alpha_1\in\Rag\gamma (\Trop f)$ and $\beta_1\in\Rag\gamma
(\Trop g)$
 such that
\[
(-\omega)\cdot\alpha_1=\min_{\alpha\in\Rag\gamma (\Trop
f)}(-\omega)\cdot\alpha\quad\text{and}\quad
(-\omega)\cdot\beta_1=\min_{\beta\in\Rag\gamma (\Trop
g)}(-\omega)\cdot\beta.
\]
By lemma \ref{Si elijo los mas pequennos su suma esta} we have
$\eta_0:=\alpha_0+\beta_0,\eta_1:=\alpha_1+\beta_1\in\Rag\gamma
(\Trop (fg))$. And

\begin{equation}
   \omega\cdot\eta_0=\min_{\eta\in\Rag\gamma (\Trop
    fg)}\omega\cdot\eta.\quad\text{and}\quad \omega\cdot\eta_1=\max_{\eta\in\Rag\gamma (\Trop
    fg)}\omega\cdot\eta.
\end{equation}
Now
\[
\left\{
\begin{array}{cc}
        & \gamma\in\VarT\Trop f\\
    \text{or}
        &\\
        &\gamma\in\VarT\Trop g
\end{array}
\right. \Leftrightarrow
\left\{
\begin{array}{cc}
        & \alpha_0\neq\alpha_1\\
    \text{or}
        &\\
        &\beta_0\neq\beta_1
\end{array}
\right. \Leftrightarrow \eta_0\neq\eta_1\Leftrightarrow \gamma\in\VarT\Trop fg.
\]
\end{proof}

\begin{cor}\label{Kapranov en una variable}
    Let $(\KK, \val)$ be an algebraically closed valued field. For $N=1$ and $f\in\KK [x]$ we have
    \[
    \VarT\Trop f =\VarTrop f.
    \]
\end{cor}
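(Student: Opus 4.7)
The plan is to prove the non-trivial inclusion $\VarT\Trop f \subset \VarTrop f$; the reverse inclusion $\VarTrop f \subset \VarT\Trop f$ is Proposition \ref{El valor de un cero esta en la tropicalizacion}. Since $\KK$ is algebraically closed and $f$ is univariate, I would first factor
\[
f \;=\; c\,\prod_{i=1}^{m}(x-\phi_i), \qquad c\in\KK\setminus\{0\},\ \phi_i\in\KK.
\]

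Next, applying the preceding proposition (on the hypersurface associated to a product) inductively to this factorization gives
\[
\VarT\Trop f \;=\; \VarT\Trop c \,\cup\, \bigcup_{i=1}^{m}\VarT\Trop(x-\phi_i).
\]
The constant $c$ has a single-monomial tropicalization, so $\VarT\Trop c = \emptyset$. For each linear factor I would split into cases: if $\phi_i=0$ then $\Trop(x-\phi_i)=0\odot x$ is itself a single monomial and contributes nothing; otherwise $\phi_i\neq 0$ and $\Trop(x-\phi_i)$ induces the map $\gamma\mapsto \min\{\gamma,\val(\phi_i)\}$, whose non-linearity locus is the single point $\val(\phi_i)$. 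In this latter case $\phi_i$ is a nonzero root of $f$, so $\phi_i\in\ceros(f)$ and hence $\val(\phi_i)\in\VarTrop f$. Taking the union over $i$ then gives $\VarT\Trop f\subset\VarTrop f$.

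The only mild obstacle is that the product proposition is stated for two factors, so it must be iterated to handle the full finite factorization; this is a routine induction on the number of factors. Beyond that the argument is transparent, and the essential content is that algebraic closure guarantees every non-linearity point of $\Trop f$ is witnessed by an actual root of $f$ via one of the linear factors $x-\phi_i$.
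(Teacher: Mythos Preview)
Your argument is correct and follows essentially the same route as the paper: factor $f$ into linear factors, apply the preceding proposition on $\VarT\Trop(fg)=\VarT\Trop f\cup\VarT\Trop g$ to reduce to the linear case, and identify $\VarT\Trop(x-\phi_i)=\{\val\phi_i\}$. You are simply more explicit than the paper about the leading coefficient, the possibility $\phi_i=0$, and the induction over the number of factors.
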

\begin{proof}
$f=\prod_{a\in\ceros (f)} (x-a)$ then $\VarT\Trop f= \cup_{{a\in\ceros (f)}}\VarT\Trop (x-a)=\{ \val (a)\mid {a\in\ceros (f)}\}$.
\end{proof}

\section{Valuation ring and residue field.}

The set
\[
{\sl A}_{\val} :=\{ a\in\KK\mid \val (a)\geq 0\}
\]
is a ring called the {\bf valuation ring}. The valuation ring has only one maximal ideal given by
\[
\mm_{\val} :=\{ a\in\KK\mid \val (a)> 0\},
\]
whose group of units is given by:
\[
{\sl U}_{\val} :=\{ a\in\KK\mid \val (a)= 0\}.
\]
Its {\bf residue field} is defined as
\[
{\sl R}_{\val} := {\sl A}_{\val}/\mm_{\val}.
\]
There is a natural map
\begin{equation}\label{aplicacion natural en el campo residual}
\begin{array}{ccc}
    {\sl A}_{\val}
        &\longrightarrow
            & {\sl R}_{\val}\\
    \varphi
        &\mapsto
            &\bar{\varphi}.
\end{array}
\end{equation}

\begin{lem}\label{el campo residual es infinito}
If $\KK$ is algebraically closed, then its residue field is
algebraically closed.
\end{lem}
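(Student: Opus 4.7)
The plan is to reduce the problem to finding roots in the algebraically closed $\KK$ of suitable lifts, and then checking that such roots automatically lie in the valuation ring. Concretely, given a non-constant polynomial over ${\sl R}_\val$, after dividing by its (invertible) leading coefficient one may assume it is monic, say $\bar P(y) = y^n + \bar c_{n-1}y^{n-1} + \cdots + \bar c_0$ with $n\geq 1$. First I would lift each $\bar c_i$ to some $c_i \in {\sl A}_\val$ via the natural map (\ref{aplicacion natural en el campo residual}), producing a monic $P(y) \in {\sl A}_\val[y]$ whose reduction modulo $\mm_\val$ is $\bar P$. Because $\KK$ is algebraically closed, $P$ has a root $\phi \in \KK$.

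The crucial step is then to show $\val(\phi)\geq 0$, i.e.\ that the root already lies in ${\sl A}_\val$. From $\phi^n = -\sum_{i=0}^{n-1} c_i\phi^i$ and axiom (3) of the valuation combined with axiom (2), one obtains
\[
n\,\val(\phi) \;\geq\; \bigoplus_{i=0}^{n-1}\bigl(\val(c_i)\odot i\,\val(\phi)\bigr).
\]
Assume for contradiction that $\val(\phi) < 0$. Since $\val(c_i)\geq 0$ for every $i$, and in the ordered group $\Gamma$ the negativity of $\val(\phi)$ forces $i\,\val(\phi)\geq (n-1)\val(\phi)$ for all $0\leq i\leq n-1$, the right-hand side is at least $(n-1)\val(\phi)$. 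Hence $n\,\val(\phi)\geq (n-1)\val(\phi)$, which gives $\val(\phi)\geq 0$, contradicting the hypothesis. Thus $\phi\in{\sl A}_\val$.

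With $\phi\in{\sl A}_\val$ established, applying the natural map to the identity $P(\phi)=0$ coefficient by coefficient yields $\bar P(\bar\phi)=0$, so $\bar\phi$ is the desired root in ${\sl R}_\val$. Note that $\bar 1\neq \bar 0$ in ${\sl R}_\val$ because $\val(1)=0$, so ${\sl R}_\val$ is genuinely a field and not the zero ring. The main (if minor) obstacle is the ordered-group manipulation in the contradiction step: one must be careful to invoke only torsion-freeness and compatibility of $\leq$ with $+$ as recorded at the start of Section~1, rather than any property specific to $\RR$.
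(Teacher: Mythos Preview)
Your argument is correct and follows the same overall architecture as the paper: lift the polynomial to ${\sl A}_{\val}[y]$, use algebraic closedness of $\KK$ to find a root, show that root lies in ${\sl A}_{\val}$, and reduce modulo $\mm_{\val}$.

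The one substantive difference is in the crucial step of forcing the root into ${\sl A}_{\val}$. You normalise to a \emph{monic} lift and run the classical ``valuation rings are integrally closed'' inequality $n\,\val(\phi)\geq (n-1)\,\val(\phi)$. The paper instead chooses a lift whose nonzero coefficients are all units (so $\val(u_j k^j)=j\,\val(k)$), invokes Lemma~\ref{Hay mas de un sumando} to get two indices $j\neq j'$ with $j\,\val(k)=j'\,\val(k)$, and concludes $\val(k)=0$ by torsion-freeness. Your route is slightly more self-contained (it does not call back to Lemma~\ref{Hay mas de un sumando}) and only yields $\val(\phi)\geq 0$, whereas the paper's argument actually pins down $\val(k)\in\{0,\infty\}$; for the purpose of this lemma either conclusion suffices.
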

\begin{proof}
    Given $P(x)\in {\sl R}_{\val} [x]\setminus {\sl R}_{\val}$ let $Q(x)\in {\sl A}_{\val} [x]\setminus {\sl A}_{\val}$
    be a pre-image of
    $P(x)$ via the map (\ref{aplicacion natural en el campo residual}). Since ${\sl A}_{\val}\subset\KK$, the polynomial
    $Q$ has a root $k\in\KK$.

Write $Q= \sum_{j=0}^d u_j x^j\in {\sl U}_{\val}$ with $u_0,u_d\neq
0$. We have
\[
 \val (u_j x^j)=  j\,\val (k).
\]
Since $\sum_{j=0}^d u_j k^j=0$,  by lemma \ref{Hay mas de un
sumando}, there exists $j\neq j'$ such that $j\,\val (k)=j'\,\val
(k)$. Then, $\val (k)= 0$ or $\val (k)=\infty$. This implies that
$k$ is an element of ${\sl A}_{\val}$.

 The image of $k$,  via the map (\ref{aplicacion natural en el campo residual}), is a
root of $P$.
\end{proof}

\begin{remark} As a consequence of lemma \ref{el campo residual es
infinito} we have: If $\KK$ is algebraically closed and the
valuation is not trivial, ${\sl R}_{\val}$ is infinite.
\end{remark}

\section{The value of the evaluation of a polynomial at a point.}
As we noted in remmark \ref{El valor de la funcion es menor o igal que la tropicalizacion envaluado en el valor}, we have
\[
\val (f(x))\geq \Trop f (\val (x))\quad\text{for all}\quad
x\in\KK^N,
\]
in this section we will see that, for each value, there exist elements for which the equality holds.

\begin{lem}\label{elementos que no dan no cero en el campo residual}
Let $f_1,\ldots ,f_k$ be a finite set of non-zero Laurent
polynomials in $N$ variables with coefficients in ${\sl R}_{\val}$.
There exists an $N$-tuple of non-zero elements ${\sl r}\in {({\sl
R}_{\val}\setminus\{ 0\})}^N$ such that $f_i ({\sl r})$ is non-zero
for each $i\in 1,\ldots ,k$.
\end{lem}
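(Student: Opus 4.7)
The plan is to reduce the statement to a single polynomial and then invoke the classical fact that a non-zero polynomial over an infinite field cannot vanish identically. First I would form the Laurent polynomial
\[
f := f_1 \cdot f_2 \cdots f_k \cdot x_1 \cdot x_2 \cdots x_N.
\]
Since ${\sl R}_{\val}$ is a field and hence an integral domain, and each factor is non-zero, $f$ is non-zero. The factors $x_j$ are inserted so that any point ${\sl r}$ with $f({\sl r}) \neq 0$ automatically satisfies $r_j \neq 0$ for every $j$, and in particular $f_i({\sl r}) \neq 0$ for every $i$. Thus it suffices to exhibit ${\sl r} \in {\sl R}_{\val}^N$ with $f({\sl r}) \neq 0$.

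Next I would pass from the Laurent polynomial $f$ to an ordinary polynomial. Choose $\beta \in \ZZ^N$ large enough that $g := x^\beta \cdot f$ lies in ${\sl R}_{\val}[x_1,\dots,x_N]$. Then $g$ is a non-zero polynomial, and for any ${\sl r}$ with all coordinates non-zero we have $f({\sl r}) = 0 \Leftrightarrow g({\sl r}) = 0$. So it is enough to find ${\sl r} \in {\sl R}_{\val}^N$ with $g({\sl r}) \neq 0$.

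The residue field ${\sl R}_{\val}$ is algebraically closed by Lemma \ref{el campo residual es infinito}, hence infinite (a finite field $\{a_1,\dots,a_q\}$ would be defeated by the polynomial $\prod_{i}(x-a_i) + 1$, which has no root). I would then prove by induction on $N$ that a non-zero polynomial over an infinite field does not vanish identically. For $N=1$ a non-zero polynomial has finitely many roots, and ${\sl R}_{\val}$ is infinite. For the inductive step, write $g = \sum_{i} g_i(x_1,\dots,x_{N-1})\, x_N^i$, pick $i$ with $g_i$ non-zero, apply the inductive hypothesis to obtain $(r_1,\dots,r_{N-1})$ with $g_i(r_1,\dots,r_{N-1})\neq 0$, and specialize; the resulting univariate polynomial in $x_N$ is non-zero and therefore has a non-root in the infinite field ${\sl R}_{\val}$.

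No step is a serious obstacle; the only subtlety is remembering to include the factors $x_1 \cdots x_N$ so that the point produced has \emph{all} coordinates non-zero, which is exactly what the Laurent-to-polynomial reduction requires. The essential input from the paper is Lemma \ref{el campo residual es infinito}, which guarantees that ${\sl R}_{\val}$ is infinite.
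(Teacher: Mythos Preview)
Your argument is correct in spirit and follows a more elementary route than the paper's. Both proofs multiply the $f_i$ together and clear denominators, but then diverge: you invoke the classical fact that a nonzero polynomial over an infinite field cannot vanish identically (using Lemma~\ref{el campo residual es infinito} only to conclude that ${\sl R}_{\val}$ is infinite), and supply the standard induction on $N$. The paper instead shifts the product so that $x^\beta g$ lies in the ideal $\langle x_1\cdots x_N\rangle$, forms the polynomial $x^\beta g - 1$, and applies the Nullstellensatz over the algebraically closed field ${\sl R}_{\val}$ to produce a zero; any such zero automatically lies off the coordinate hyperplanes and forces $g$, hence each $f_i$, to be invertible there. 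Your approach avoids the Nullstellensatz entirely; the paper's is a one-line appeal to a bigger theorem.

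One small slip to fix: multiplying by $x_1\cdots x_N$ \emph{before} clearing denominators does not ensure that the resulting polynomial $g=x^\beta f$ is divisible by each $x_j$, since the monomial shift $x^\beta$ can absorb exactly those factors. Consequently your sentence ``it is enough to find $r\in {\sl R}_{\val}^N$ with $g(r)\neq 0$'' does not yet guarantee $r$ lies in the torus. The repair is immediate: either multiply by $x_1\cdots x_N$ \emph{after} passing to an ordinary polynomial (so the factors visibly survive), or observe that in your induction you can avoid the value $0$ along with the finitely many roots at each univariate step.
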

\begin{proof}

Set $g:= \prod_{i=1}^k f_i$, $g$ is a Laurent polynomial
\[
g:=\sum_{\alpha=(\alpha_1,\ldots ,\alpha_N)\in\Lambda\subset \ZZ^N}
r_\alpha a^\alpha,\quad r_\alpha\in {\sl R}_{\val}
,\,\#\Lambda<\infty
\]
set $\beta := (1,\ldots ,1)-(\min_{\alpha\in\Lambda}\alpha_1,\ldots ,\min_{\alpha\in\Lambda}\alpha_N)$. We have $x^\beta g\in <x^{(1,\ldots ,1)}> {\sl R}_{\val} [x]$.

The set of zeroes of $f:=x^\beta g-1$ is a hypersurface of ${{\sl R}_{\val}}^N$ that doesn't intersect the coordinate hyperplanes. Since ${\sl R}_{\val}$ is an algebraically closed field (lemma
\ref{el campo residual es
 infinito}), by the Nullstellensatz (see for example \cite[A6.P1]{Shafarevich:1994}), there exists a point ${\sl r}\in {({\sl R}_{\val}\setminus\{ 0\})}^N$ where $f$  vanishes.

We have:
\[
f({\sl r})={\sl r}^\beta g({\sl r})-1=0 \Rightarrow {\sl r}^\beta
\prod_{i=0}^r f_i ({\sl r})=1 \Rightarrow f_i ({\sl r})\neq 0\,
\forall ,i=1\ldots k.
\]
\end{proof}

\begin{lem}\label{hay una nupla de unidades}
Let $f_1,\ldots ,f_k$ be a finite set of Laurent polynomials in $N$
variables with coefficients in ${\sl A}_{\val}$. If at least one on
the coefficients  of each $f_i$ is a unit, then there exists an
$N$-tuple of units $u\in {{\sl U}_{\val}}^N$ such that $f_i (u)$ is
a unit for each $i\in \{1,\ldots ,k\}$.
\end{lem}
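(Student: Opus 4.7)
The plan is to reduce to the previous lemma by passing to the residue field $\sl R_{\val}$. First I would apply the natural map (\ref{aplicacion natural en el campo residual}) coefficient-wise to each $f_i$, producing a Laurent polynomial $\bar f_i\in {\sl R}_{\val}[x_1^{\pm 1},\ldots,x_N^{\pm 1}]$. The hypothesis that at least one coefficient of each $f_i$ is a unit (hence has nonzero reduction) guarantees that every $\bar f_i$ is a non-zero Laurent polynomial over ${\sl R}_{\val}$.

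Next I would invoke Lemma \ref{elementos que no dan no cero en el campo residual} applied to the family $\bar f_1,\ldots,\bar f_k$ to obtain an $N$-tuple ${\sl r}=(r_1,\ldots,r_N)\in({\sl R}_{\val}\setminus\{0\})^N$ with $\bar f_i({\sl r})\neq 0$ for every $i\in\{1,\ldots,k\}$.

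The final step is to lift ${\sl r}$ to units of $\KK$. For each $j$, choose any preimage $u_j\in{\sl A}_{\val}$ of $r_j$ under the reduction map; since $\bar u_j=r_j\neq 0$ in ${\sl R}_{\val}$, we have $u_j\notin\mm_{\val}$ and therefore $\val(u_j)=0$, so $u_j\in{\sl U}_{\val}$. In particular $u_j^{-1}\in{\sl U}_{\val}$ as well, so $u^\alpha$ is well-defined in ${\sl U}_{\val}$ for every $\alpha\in\ZZ^N$, and reduction commutes with evaluation of Laurent monomials at units: $\overline{u^\alpha}=\bar u^\alpha={\sl r}^\alpha$. Consequently $\overline{f_i(u)}=\bar f_i({\sl r})\neq 0$, which forces $\val(f_i(u))=0$, i.e.\ $f_i(u)\in{\sl U}_{\val}$, as required.

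The only subtlety I anticipate is the compatibility of reduction with Laurent (as opposed to ordinary) polynomial evaluation; this is precisely why it is essential that the chosen coordinates $u_j$ be units rather than merely elements of ${\sl A}_{\val}$, so that negative powers make sense and behave correctly under the projection $\sl A_{\val}\to\sl R_{\val}$. Once this point is noted, the argument is a direct application of Lemma \ref{elementos que no dan no cero en el campo residual} combined with the fact that nonzero residues lift to units.
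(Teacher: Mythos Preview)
Your proposal is correct and follows essentially the same approach as the paper's own proof: reduce each $f_i$ modulo $\mm_{\val}$, apply Lemma~\ref{elementos que no dan no cero en el campo residual} to obtain ${\sl r}\in({\sl R}_{\val}\setminus\{0\})^N$, lift to $u\in{\sl U}_{\val}^N$, and conclude that $\overline{f_i(u)}=\bar f_i({\sl r})\neq 0$ forces $f_i(u)\in{\sl U}_{\val}$. Your version is in fact slightly more explicit than the paper's about why the lifts are units and why reduction commutes with Laurent evaluation at units.
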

\begin{proof}

 Let $\bar{f_i}$ be the image of $f_i$ in ${\sl R}_{\val} [x^*]$ via the
 natural morphism. That is
\[
\begin{array}{cccc}
    \Phi :
        &{\sl A}_{\val} [x^*]
            &\longrightarrow
                &{\sl R}_{\val} [x^*]\\
        &\sum_{\alpha}\varphi_{\alpha} x^\alpha
            &\mapsto
                & \sum_{\alpha}\bar{\varphi_{\alpha}} x^\alpha
\end{array}
\]
where $\bar{\varphi_\alpha}$ is the image of $\varphi_\alpha$ via
the map (\ref{aplicacion natural en el campo residual}).

Since at least one of the coefficients is a unit
 $\bar{f_i}$ is not zero. By lemma \ref{elementos que no dan no cero en el campo residual},
 there exists an $N$-tuple of non-zero elements ${\sl r}\in {({\sl R}_{\val}\setminus\{ 0\})}^N$ such
 that $\bar{f_i} ({\sl r})$ is non-zero for each $i\in \{1,\ldots ,k\}$. Take $x\in {{\sl A}_{\val}}^N$ such that $\bar{x}= {\sl r}$ via the natural map (\ref{aplicacion natural en el campo residual}).

We have $x\in {{\sl U}_{\val}}^N$ and $\Phi (f_i(x))= \bar{f_i}({\sl r})\neq 0$ implies $f_i(x)\in {\sl U}_{\val}$.
\end{proof}

\begin{prop}\label{hay una Nupla con un valor dado para la que val y f conmutan}
Let $f_1,\ldots ,f_k$ be   Laurent polynomials in $N$ variables with
coefficients in $\KK$. Given an $N$-tuple $\gamma\in\Gamma^N$ there
exists $x\in\KK^N$ such that
\[
\val (x)=\gamma\quad\text{and}\quad  \val (f_i(x))=\Trop f_i (\val
(x))
\]
for all $i\in\{ 1,\ldots ,k\}$.
\end{prop}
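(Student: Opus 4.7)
The plan is to reduce the proposition to Lemma \ref{hay una nupla de unidades} by a change of variables that normalises each $x_j$ to a unit and each $f_i$ to a Laurent polynomial with coefficients in ${\sl A}_{\val}$ at least one of which is a unit.

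First, I would exploit surjectivity of $\val$ to pick, once and for all, $\xi_j\in\KK\setminus\{0\}$ with $\val(\xi_j)=\gamma_j$ for $j=1,\dots,N$, and elements $\pi_i\in\KK\setminus\{0\}$ with $\val(\pi_i)=\Trop f_i(\gamma)$ for $i=1,\dots,k$. (If some $f_i=0$ the statement is trivial; otherwise $\Trop f_i(\gamma)\in\Gamma$ because every $\val(\varphi_{i,\alpha})\in\Gamma$ and $\gamma\in\Gamma^N$.) I would then look for $x$ in the form $x_j=u_j\xi_j$ with $u_j\in{\sl U}_{\val}$, which automatically gives $\val(x)=\gamma$.

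The next step is to write, for each $i$,
\[
g_i(u)\;:=\;\pi_i^{-1}\,f_i(u_1\xi_1,\dots,u_N\xi_N)\;=\;\sum_{\alpha\in\EE(f_i)}\bigl(\pi_i^{-1}\varphi_{i,\alpha}\xi^{\alpha}\bigr)\,u^{\alpha},
\]
and to observe that the valuation of the coefficient of $u^\alpha$ equals $\val(\varphi_{i,\alpha})+\gamma^\alpha-\Trop f_i(\gamma)\ge 0$, with equality exactly when $\alpha\in\Rag\gamma(\Trop f_i)$. Hence each $g_i$ is a Laurent polynomial with coefficients in ${\sl A}_{\val}$, and the coefficients indexed by $\Rag\gamma(\Trop f_i)$ (which is non-empty) are units.

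Now Lemma \ref{hay una nupla de unidades} applies to the finite family $g_1,\dots,g_k$ and yields $u\in{\sl U}_{\val}^N$ with $g_i(u)\in{\sl U}_{\val}$ for every $i$. Setting $x_j:=u_j\xi_j$, we get $\val(x)=\gamma$ and
\[
\val(f_i(x))=\val(\pi_i)+\val(g_i(u))=\Trop f_i(\gamma)+0=\Trop f_i(\val x),
\]
which is the required equality. The only potentially delicate point is the bookkeeping in the substitution step showing that the normalised coefficients lie in ${\sl A}_{\val}$ and that at least one is a unit; once this is done, the statement falls out immediately from the Nullstellensatz-type lemma proved earlier, and there is no real analytic or topological obstacle to overcome.
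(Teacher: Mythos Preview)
Your proof is correct and is essentially the same as the paper's: you normalise via $g_i(u)=\pi_i^{-1}f_i(u_1\xi_1,\dots,u_N\xi_N)$ exactly as the paper does with $g_i(x)=\psi_i^{-1}f_i(\phi_1x_1,\dots,\phi_Nx_N)$, verify that the coefficients lie in ${\sl A}_{\val}$ with at least one a unit, and then invoke Lemma~\ref{hay una nupla de unidades}. The only cosmetic difference is that you compute the coefficient valuations directly, whereas the paper phrases the same observation as $\Trop g_i(0,\dots,0)=0$.
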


\begin{proof}

Take $\phi\in\KK^N$ and $\psi_i\in\KK$ such that $\val\phi=\gamma$
and $\val\psi_i=\Trop f_i (\gamma )$. Set
\[
g_i (x_1,\ldots ,x_N) := \frac{1}{\psi_i} f_i (\phi_1 x_1,\ldots
,\phi_N x_N)
\]
we have $\Trop g_i (0,\ldots ,0) =0$.

Write $g_i=\sum_\alpha \varphi_{i,\alpha} x^\alpha$, we have
 $\Trop g_i (0,\ldots ,0)= \bigoplus_\alpha \val (\varphi_{i,\alpha})=0$.
 Then, for each $i$, there exists $\alpha_0^{(i)}$ such that $\val (\varphi_{(i,\alpha_0^{(i)})})=0$ and
 $\val (\varphi_{i,\alpha})\geq 0$ for all
 $\alpha$. That is $g_i\in {\sl A}_{\val} [x]$ and one of the coefficients is a unit. By lemma \ref{hay una nupla de unidades}, there exists
 $u=(u_1,\ldots ,u_N)\in {{\sl U}_{\val}}^N$ such that $\val (g_i(u))=0$. Then
\[
\val \left(\frac{1}{\psi_i} f_i (\phi_1 u_1,\ldots ,\phi_N
u_N)\right)=0 \Rightarrow \val (f_i (\phi_1 u_1,\ldots ,\phi_N
u_N))=\val (\psi_i )= \Trop f_i (\gamma ).
\]

Since $\val ((\phi_1 u_1,\ldots ,\phi_N
u_N))=\gamma$, we have the result.

\end{proof}

\section{The theorem}
Now we are ready to extend the theorem proved by Einsieder, Kapranov
and Lind.

\begin{thm} Let $\KK$ be an algebraically closed valued field.
The  tropical hypersurface associated to a polynomial $f\in\KK
[x^*]$ is the hypersurface associated to the tropicalization of $f$.
That is,
\[
\VarTrop f=\VarT\Trop f.
\]

\end{thm}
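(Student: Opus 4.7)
The statement already has one inclusion essentially for free: Proposition~\ref{El valor de un cero esta en la tropicalizacion} gives $\VarTrop f\subset\VarT\Trop f$, so the real work is the reverse inclusion. My plan is, given $\gamma\in\VarT\Trop f$, to construct $\phi\in(\KK\setminus\{0\})^N$ with $\val(\phi)=\gamma$ and $f(\phi)=0$, reducing to the one‑variable Kapranov statement (Corollary~\ref{Kapranov en una variable}) by a sequence of normalizations and one evaluation of $N-1$ of the variables at well-chosen units.

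\textbf{Step 1 (reduction to $\gamma=0$ and to the valuation ring).} Using the surjectivity of $\val$, pick $\phi^{(0)}\in\KK^N$ with $\val(\phi^{(0)})=\gamma$ and a scalar $\psi\in\KK$ with $\val(\psi)=\Trop f(\gamma)$. The Laurent polynomial
\[
\hat f(y):=\psi^{-1}f(\phi^{(0)}_1 y_1,\ldots,\phi^{(0)}_N y_N)
\]
has coefficients in ${\sl A}_{\val}$, and the defining property $\gamma\in\VarT\Trop f$ translates to the statement that at least two coefficients of $\hat f$ are units; equivalently, the residue polynomial $\overline{\hat f}\in{\sl R}_{\val}[y^*]$ contains at least two monomials and $0\in\VarT\Trop\hat f$. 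Moreover, a zero of $\hat f$ in $({\sl U}_{\val})^N$ pulls back via the rescaling to a zero of $f$ whose valuation is exactly~$\gamma$. So it suffices to exhibit $u\in({\sl U}_{\val})^N$ with $\hat f(u)=0$.

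\textbf{Step 2 (separating two exponents in one variable).} Choose two distinct $\alpha,\beta\in\Rag_{0}(\Trop\hat f)$. They differ in some coordinate; after permuting variables I may assume $\alpha_N\neq\beta_N$. Group $\hat f$ by its last exponent,
\[
\hat f(y)=\sum_{j}h_j(y_1,\ldots,y_{N-1})\,y_N^{\,j},\qquad h_j\in{\sl A}_{\val}[y_1^*,\ldots,y_{N-1}^*],
\]
so that $h_{\alpha_N}$ contains the unit monomial $\hat\varphi_{\alpha}y_1^{\alpha_1}\cdots y_{N-1}^{\alpha_{N-1}}$ and $h_{\beta_N}$ the unit monomial $\hat\varphi_{\beta}y_1^{\beta_1}\cdots y_{N-1}^{\beta_{N-1}}$. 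In particular each of $h_{\alpha_N}$ and $h_{\beta_N}$ has at least one coefficient in ${\sl U}_{\val}$, so Lemma~\ref{hay una nupla de unidades} applies and yields units $u_1,\ldots,u_{N-1}\in{\sl U}_{\val}$ making both $h_{\alpha_N}(u)$ and $h_{\beta_N}(u)$ units.

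\textbf{Step 3 (finishing via the one–variable case).} The specialization $g(x):=\hat f(u_1,\ldots,u_{N-1},x)\in{\sl A}_{\val}[x^*]$ then has at least two coefficients of valuation $0$ (those at degrees $\alpha_N$ and $\beta_N$), and all other coefficients of nonnegative valuation. Consequently $0\in\VarT\Trop g$, and Corollary~\ref{Kapranov en una variable} produces $u_N\in\KK^*$ with $g(u_N)=0$ and $\val(u_N)=0$. Then $u=(u_1,\ldots,u_N)\in({\sl U}_{\val})^N$ is the desired zero of $\hat f$, completing the argument. The only slightly delicate step is Step 2: one must verify that the two chosen exponents in $\Rag_0(\Trop\hat f)$ genuinely contribute to two distinct coefficients of the specialized one–variable polynomial, which is exactly why Lemma~\ref{hay una nupla de unidades} is indispensable, as it lets us avoid accidental cancellations when $N-1$ variables are substituted by units.
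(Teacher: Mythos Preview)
Your argument is correct and follows essentially the same route as the paper: split off one variable, specialize the remaining $N-1$ coordinates using the results of Section~7 so that the two witnessing exponents survive, and then invoke the one--variable case (Corollary~\ref{Kapranov en una variable}). The only cosmetic difference is that you normalize first to $\gamma=0$ and invoke Lemma~\ref{hay una nupla de unidades} on just the two relevant $h_j$'s, whereas the paper skips the normalization and applies the stronger Proposition~\ref{hay una Nupla con un valor dado para la que val y f conmutan} to the full family $\{h_i\}$ at once.
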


\begin{proof}
The inclusion $\VarTrop f\subset\VarT\Trop f$ is just proposition
\ref{El valor de un cero esta en la tropicalizacion}.

To see the other inclusion:\\
Given $\gamma\in\VarT\Trop f$ we want to see that there exists $\phi
=(\phi_1,\ldots ,\phi_N)$ such that $\val\phi =\gamma$ and
$f(\phi)=0$.

$\gamma\in\VarT\Trop f$ if and only if there exist
$\alpha^{(0)}\neq\alpha^{(1)}\in\Rag\gamma (\Trop f)$. The vector
$\alpha^{(0)}$ is different from $\alpha^{(1)}$ if and only if one
of the coordinates is different. Let's suppose that
${\alpha^{(0)}}_N\neq {\alpha^{(1)}}_N$. Write $f$ as in
(\ref{polinomio en K de x}) and set $\Lambda :=\{\alpha_N\in\ZZ\mid
\alpha\in\EE (f)\}$. The polynomial $f$ may be rewritten in the form
\[
f= \sum_{i\in\Lambda}h_i(x_1,\ldots ,x_{N-1}){x_N}^i\,\,\text{where}\, h_i=\sum_{(\beta ,i)\in\EE (f)}\varphi_{(\beta ,i)}x^{(\beta ,0)}.
\]

Write $\gamma=(\mu ,\eta)\in \Gamma^{N-1}\times\Gamma$, and choose $y\in\KK^{N-1}$ such that $\val y=\mu$ and  $\val (h_i (y))= \Trop h_i (\mu)$ (proposition \ref{hay una Nupla con un valor dado para la que val y f conmutan}). Set
\[
g:=\sum_{i\in\Lambda}h_i(y){x_N}^i\in\KK [x_N].
\]
We have
\[
\begin{array}{ll}
    \Trop f (\gamma)
        &= \oplus_{\alpha\in\EE (f)}\val (\varphi_\alpha)\odot\gamma^\alpha \\
        &=  \oplus_{i\in\Lambda}\left(\oplus_{(\beta ,i)\in\EE (f)}\val (\varphi_{(\beta ,i)})\odot\mu^\beta\right)\odot\eta^i\\
        &= \oplus_{i\in\Lambda} \Trop h_i (\mu) \odot\eta^i\\
        & = \oplus_{i\in\Lambda} \val (h_i (y)) \odot\eta^i\\
        & = \Trop g(\eta).
\end{array}
\]

Write $\alpha^{(k)}=(\beta^{(k)},j^{(k)})\in \ZZ^{N-1}\times\ZZ$, $k=0,1$. We have
\[
\begin{array}{ll}
     \Trop g(\eta)
        &= \val
        (\varphi_{\alpha^{(k)}})\odot\gamma^{\alpha^{(k)}}
        = \val
        (\varphi_{(\beta^{(k)},j^{(k)})})\odot\mu^{\beta^{(k)}}\odot
        \eta^{j^{(k)}}\\
        & = \Trop h_i (\mu )\odot \eta^{j^{(k)}} = \val
(h_i(y))\odot \eta^{j^{(k)}}.
\end{array}
\]
Since $j^{(0)}\neq j^{(1)}$, the element $\eta\in\Gamma$ is in the variety $\VarT\Trop g$, then, by proposition
\ref{Kapranov en una variable}, there exists $z\in\KK$ such that $\val z=\eta$ and $g(z)=0$.

We have $\phi:= (y,z)\in\KK^N$, $\val (y, z)=\gamma$ and $f(y,z)=g(z)=0$.
\end{proof}


\begin{thebibliography}{10}

\bibitem{FAroca:2008}
F.~Aroca.
\newblock Tropical geometry for fields with a krull valuation.
\newblock Aviable at: http://www.matcuer.unam.mx/personal.php?id=12, 2008.

\bibitem{EinsiedlerKapranov:2006}
M.~Einsiedler, M.~Kapranov, and D.~Lind.
\newblock Non-{A}rchimedean amoebas and tropical varieties.
\newblock {\em J. Reine Angew. Math.}, 601:139--157, 2006.
\newblock arXiv:math.AG/0408311v2.

\bibitem{Eisenbud:1995}
D.~Eisenbud.
\newblock {\em Commutative algebra}, volume 150 of {\em Graduate Texts in
  Mathematics}.
\newblock Springer-Verlag, New York, 1995.
\newblock With a view toward algebraic geometry.

\bibitem{Gathmann:2006}
A.~Gathmann.
\newblock Tropical algebraic geometry.
\newblock {\em Jahresber. Deutsch. Math.-Verein.}, 108(1):3--32, 2006.
\newblock arXiv:math.AG/0601322v1".

\bibitem{ItenbergMikhalkin:2007}
I.~Itenberg, G.~Mikhalkin, and E.~Shustin.
\newblock {\em Tropical algebraic geometry}, volume~35 of {\em Oberwolfach
  Seminars}.
\newblock Birkh\"auser Verlag, Basel, 2007.

\bibitem{Katz:2009}
E.~Katz.
\newblock A tropical toolkit.
\newblock {\em Expositiones Mathematicae}, 27(1):1 -- 36, 2009.
\newblock arXiv:math/0610878.

\bibitem{Krull:1932}
W.~Krull.
\newblock Allgemeine bewertungstheorie.
\newblock {\em J. Reine Angew. Math.}, 167:160--197, 1932.

\bibitem{Ribenboim:1999}
P.~Ribenboim.
\newblock {\em The theory of classical valuations}.
\newblock Springer Monographs in Mathematics. Springer-Verlag, New York, 1999.

\bibitem{RichterSturmfels:2005}
J.~Richter-Gebert, B.~Sturmfels, and T.~Theobald.
\newblock First steps in tropical geometry.
\newblock In {\em Idempotent mathematics and mathematical physics}, volume 377
  of {\em Contemp. Math.}, pages 289--317. Amer. Math. Soc., Providence, RI,
  2005.
\newblock arXiv:math.AG/0306366.

\bibitem{Shafarevich:1994}
I.~R. Shafarevich.
\newblock {\em Basic algebraic geometry. 1}.
\newblock Springer-Verlag, Berlin, second edition, 1994.
\newblock Varieties in projective space, Translated from the 1988 Russian
  edition and with notes by Miles Reid.

\bibitem{Spivakovsky:1990}
M.~Spivakovsky.
\newblock Valuations in function fields of surfaces.
\newblock {\em Amer. J. Math.}, 112(1):107--156, 1990.

\bibitem{ZariskiSamuel:1975II}
O.~Zariski and P.~Samuel.
\newblock {\em Commutative algebra. {V}ol. {II}}.
\newblock Springer-Verlag, New York, 1975.
\newblock Reprint of the 1960 edition, Graduate Texts in Mathematics, Vol. 29.

\end{thebibliography}
\def\cprime{$'$}

\end{document}